\newtheorem{thm}{Theorem}[section]
 \newtheorem{lem}[thm]{Lemma}
 \newtheorem{prop}[thm]{Proposition}
 \newtheorem{defn}[thm]{Definition}
  \newtheorem{ex}[thm]{Example}
 \newtheorem{rem}[thm]{Remark}
 \def\Xint#1{\mathchoice
   {\XXint\displaystyle\textstyle{#1}}%
   {\XXint\textstyle\scriptstyle{#1}}%
   {\XXint\scriptstyle\scriptscriptstyle{#1}}%
   {\XXint\scriptscriptstyle\scriptscriptstyle{#1}}%
   \!\int}
\def\XXint#1#2#3{{\setbox0=\hbox{$#1{#2#3}{\int}$}
     \vcenter{\hbox{$#2#3$}}\kern-.5\wd0}}
\def\dashint{\Xint-}
\def\upint{\mathchoice%
    {\mkern13mu\overline{\vphantom{\intop}\mkern7mu}\mkern-20mu}%
    {\mkern7mu\overline{\vphantom{\intop}\mkern7mu}\mkern-14mu}%
    {\mkern7mu\overline{\vphantom{\intop}\mkern7mu}\mkern-14mu}%
    {\mkern7mu\overline{\vphantom{\intop}\mkern7mu}\mkern-14mu}%
  \int}
\def\wlim{\mathrel{\ensurestackMath{\stackon[1pt]{\rightharpoonup}{\scriptstyle\ast}}}}
\title{Weak differentiability of metric space valued Sobolev maps}
\author{Paul Creutz}
	\address{
Max Planck Institute for Mathematics,
Vivatsgasse 7,
53111 Bonn,
Germany}
\email{Paul.Creutz@ish.de}
\author{Nikita Evseev}
\address{
 Institute for Advanced Study in Mathematics, Harbin Institute of Technology, 150006 Harbin, China, and
 Sobolev Institute of Mathematics, 4 Academic Koptyug avenue,  630090 Novosibirsk, Russia.
	}
\email{evseev@math.nsc.ru}
\begin{document}

\maketitle
\begin{abstract}
We show that Sobolev maps with values in a dual Banach space can be characterized in terms of weak derivatives in a weak* sense. Since every metric space embeds isometrically into a dual Banach space, this implies 
a characterization of metric space valued Sobolev maps in terms of such derivatives. 
Furthermore, we investigate for which target spaces Sobolev maps are weak* differentiable almost everywhere. 
\end{abstract}

\section{Introduction}
\subsection{Main results}
We study first-order Sobolev maps $u \colon \Omega \to X$ where $\Omega \subset \mathbb{R}^n$ is a bounded domain and $X$ a complete metric space. Several equivalent definitions of the Sobolev spaces $W^{1,p}(\Omega;X)$ have been established over the last 30 years. In particular nowadays it is well agreed upon which maps $u \colon \Omega \to X$ lie in $W^{1,p}(\Omega;X)$ and which do not, see \cite[Chapter 10]{HKST2015}, \cite{Reshetnyak2004} and \cite{AGS:13} for an overview. The classical approach for real valued functions is to define $W^{1,p}(\Omega)$ as the collection of those $f\in L^p(\Omega)$ which for every $j=1, \dots , n$ have a weak partial derivative $f^j\in L^p(\Omega)$ satisfying
\begin{equation}
\label{eq:classical_weak_derivative}
\int_\Omega\frac{\partial \varphi}{\partial x_j}(x)\cdot f(x)\ \textnormal{d}x 
= -\int_\Omega\varphi(x)\cdot f^j(x) \ \textnormal{d}x \quad \text{ for every } \varphi \in C^\infty_0(\Omega).
\end{equation}
The aim of this article is to provide a characterization of similar flavour for $X$-valued maps which is compatible with the well-established metric definitions of $W^{1,p}(\Omega ;X)$.

One can define $L^p(\Omega;X)$ as the collection of those measurable and essentially separably valued $f\colon \Omega \to X$ such that $d(f(-),z_0) \in L^p(\Omega)$ for every $z_0\in X$. However in general $X$ does not carry a linear structure and hence integrals as in~\eqref{eq:classical_weak_derivative} are not defined for $X$-valued maps. Nevertheless an auxiliary linear structure can be introduced by embedding $X$ isometrically into a Banach space $V$. For example the Kuratowski map $\kappa \colon X\to \ell^{\infty}(X)$  provides an isometric embedding into the Banach space $\ell^\infty(X)$ of bounded functions on $X$. Thus a natural approach is to define first Banach-valued Sobolev maps, and then metric valued ones as those which postcomposed with an isometric embedding into a Banach space are Sobolev. 

Functions in $L^p(\Omega;V)$ are Bochner integrable and hence it is tempting to define $W^{1,p}_{\textnormal{Ban}}(\Omega;V)$ as the collection of those $f\in L^p(\Omega; V)$ which have weak partial derivatives $f^j\in L^p(\Omega;V)$ that satisfy \eqref{eq:classical_weak_derivative} in the sense of Bochner integrals. For Banach valued maps such definition goes back to Sobolev himself \cite{Sob:63} and was studied classically by the PDE and functional analysis communities, see e.g.\ \cite{Sch87, CH1998, SS05,  HNVW16, AK18}. 
More recently it was introduced for metric valued maps in \cite{HT:08} and then used in a bunch of papers such as \cite{WZ:09, Haj:09, BMT:13, HST:14}.  However, as observed in \cite{CJPA2020} and \cite{Evs21}, such definition of $W^{1,p}_{\textnormal{Ban}}(\Omega;V)$  is consistent with the metric definitions of $W^{1,p}(\Omega;V)$ only if $V$ has the Radon--Nikodym property. This is problematic since many spaces of geometric interest 
do not isometrically embed into a Banach space with this property, see \cite[Theorem 1.6]{CK:06} and \cite[Remark~4.2]{Cre:22}. Indeed the metric Sobolev space defined in terms of $W^{1,p}_{\textnormal{Ban}}(\Omega;\ell^{\infty}(X))$ and the Kuratowski embedding $\kappa \colon X\to \ell^\infty(X)$ is always trivial \cite{CE:2021}. To fix this issue we propose the following definition.
\begin{defn}
\label{def:w*}
    Let $\Omega \subset \mathbb{R}^n$ be a bounded domain, $p\in[1,\infty)$ and $V^*$ be a dual Banach space. The space $ L^{p}_*(\Omega; V^*)$ consists of those functions $f\colon \Omega\to V^*$ that are weak* measurable and for which the upper integral $\upint_\Omega \|f(x)\|^p \ \textnormal{d}x$ is finite.\\
A function $f$ lies in the Sobolev space $W^{1,p}_*(\Omega ; V^*)$ if $f\in L^p(\Omega ;V^*)$ and for every $j=1, \dots , n$ there is a function $f^j\in L^p_*(\Omega;V^*)$ such that
\begin{equation}
\label{eq:def-w*}
\int_\Omega\frac{\partial \varphi}{\partial x_j}(x)\cdot f(x)\ \textnormal{d}x 
= -\int_\Omega\varphi(x)\cdot f^j(x) \ \textnormal{d}x \quad \text{ for every } \varphi \in C^\infty_0(\Omega)
\end{equation}
in the sense of Gelfand integrals.
\end{defn}
Definition~\ref{def:w*} is a slight variation on Definition~1.3 in \cite{CE:2021} where additionally it is assumed that $x \mapsto \|f(x)\|$ is measurable for functions $f\in L^p_*(\Omega;V^*)$, see the discussion in Section~\ref{sec:meas-norm}. 
The main result of this article is the equivalence of Definition~\ref{def:w*} with the metric characterizations of $W^{1,p}(\Omega; V^*)$.
\begin{thm}
\label{thm:main}
    Let $\Omega \subset \mathbb{R}^n$ be a bounded domain, $p\in (1,\infty)$ and $V^*$ be a dual Banach space. Then $W^{1,p}_*(\Omega, V^*)=W^{1,p}(\Omega; V^*)$.
\end{thm}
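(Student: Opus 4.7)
The plan is to establish the two inclusions separately, reducing each to a scalar statement about the functions $\langle v, f(\cdot)\rangle$ for $v$ in the unit ball of the predual $V$, and then reassembling the $V^*$-valued objects. As working characterization of $W^{1,p}(\Omega; V^*)$ I would adopt the Reshetnyak--Ambrosio description: $f$ lies in $W^{1,p}(\Omega; V^*)$ iff it is essentially separably valued, $\|f(\cdot) - x_0^*\| \in L^p(\Omega)$ for some $x_0^* \in V^*$, and there exists $g \in L^p(\Omega)$ with $\psi \circ f \in W^{1,p}(\Omega)$ and $|\nabla(\psi \circ f)| \leq g$ a.e.\ for every $1$-Lipschitz $\psi\colon V^* \to \mathbb{R}$.

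\emph{The inclusion $W^{1,p}_* \subseteq W^{1,p}$.} Given $f \in W^{1,p}_*(\Omega; V^*)$, I would mollify by convolution: $f_\varepsilon := \rho_\varepsilon * f$, interpreted via the Gelfand integral so that $\langle v, f_\varepsilon(x)\rangle = (\rho_\varepsilon * \langle v, f\rangle)(x)$ for every $v \in V$. The defining identity~\eqref{eq:def-w*} gives $\partial_j f_\varepsilon = \rho_\varepsilon * f^j$ in the Gelfand sense, and the upper-integral hypothesis produces uniform $L^p$-bounds on the norms $\|f_\varepsilon\|$ and $\|\partial_j f_\varepsilon\|$. Each $f_\varepsilon$ is norm-Lipschitz as a $V^*$-valued map, hence trivially a metric Sobolev map on $\Omega_\varepsilon$. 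Standard convergence $f_\varepsilon \to f$ in $L^p(\Omega;V^*)$ together with lower semicontinuity of the metric Sobolev energy will then deliver $f \in W^{1,p}(\Omega; V^*)$.

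\emph{The inclusion $W^{1,p} \subseteq W^{1,p}_*$.} For $f \in W^{1,p}(\Omega; V^*)$ with a common upper gradient $g \in L^p(\Omega)$, apply the post-composition property to the linear $1$-Lipschitz functionals $\psi_v(x^*) = \langle v, x^*\rangle$ for $\|v\| \leq 1$. This yields scalar weak derivatives $G^j_v = \partial_j\langle v, f\rangle \in L^p(\Omega)$ with $|G^j_v(x)| \leq \|v\|\,g(x)$ a.e. The real work is to assemble $\{G^j_v\}_v$ into a weak$^*$-measurable $f^j\colon\Omega\to V^*$ satisfying~\eqref{eq:def-w*}. When $V$ is separable, choose a countable dense $\{v_n\}\subset V$, find a full-measure subset of $\Omega$ on which $v_n \mapsto G^j_{v_n}(x)$ is jointly linear and bounded by $\|v_n\|g(x)$, and extend by density to define $f^j(x) \in V^*$ with $\|f^j(x)\| \leq g(x)$; weak$^*$-measurability and the integration-by-parts identity against test functions then follow by scalar Fubini.

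\emph{Main obstacle.} The nonseparable case is the delicate one. Here I would use that metric Sobolev maps are essentially separably valued to find a separable subspace $Y \subseteq V^*$ containing almost all of $f(\Omega)$, then Hahn--Banach to locate a separable $V_0 \subseteq V$ whose functionals reconstruct the norm on $Y$. The separable argument applied through $V_0$ yields a candidate weak$^*$-derivative in $V_0^*$, which must be lifted back to $V^*$ while preserving weak$^*$-measurability against the whole predual. This lift, together with the fact that $L^p_*$ asks only for upper integrability of $\|f^j(\cdot)\|$ (which may genuinely fail to be measurable, as discussed in Section~\ref{sec:meas-norm}), forms the main technical burden and is precisely what forces the weak$^*$ framework of Definition~\ref{def:w*} in place of the Bochner one.
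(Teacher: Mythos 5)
Your decomposition into two inclusions is right, and you correctly locate the difficulty in the direction $W^{1,p}\subseteq W^{1,p}_*$; but that direction is exactly where your argument has a genuine gap. (For the record, the other inclusion is handled in the paper without mollification: if $f^j\in L^p_*$ is a weak weak* derivative, then for each $v$ the function $f^j_v$ is a scalar weak partial derivative of $f_v$ with $|f^j_v|\le\|v\|\,g^j$ for a majorant $g^j\in L^p$ of $\|f^j(\cdot)\|$ supplied by the upper integral, and one invokes the characterization of $W^{1,p}(\Omega;V^*)$ via postcomposition with the linear functionals $\langle v,\cdot\rangle$, Proposition~\ref{Prop:W*=R*}. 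Your mollification-plus-lower-semicontinuity route is plausible since $f$ itself is strongly measurable, so $f_\varepsilon\to f$ in $L^p(\Omega;V^*)$ in the Bochner sense, but it is a detour.)

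The gap: the ``lift back to $V^*$'' in your nonseparable case is not a technical burden to be discharged later; it is the entire problem, and Hahn--Banach does not solve it. The candidate $f^j(x)$ must be a functional on all of $V$ satisfying $\langle v,f^j(x)\rangle=\partial_j f_v(x)$ almost everywhere for \emph{every} $v\in V$, including $v\notin V_0$. An arbitrary Hahn--Banach extension of a $V_0^*$-valued candidate carries no information about $\partial_j f_v$ for $v\notin V_0$, and a pointwise choice of extensions need not even be weak* measurable against such $v$. Passing to a separable norming subspace of $V$ does not help, because the difference quotients $h^{-1}(f(x+he_j)-f(x))$ need not converge weak* at all and their putative limits need not lie in any separable subspace of $V^*$ (cf.\ Example~\ref{ex:not-diff}). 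The paper's device is an ultrafilter: fix a nonprincipal ultrafilter $\omega$ on $\mathbb{N}$ and a sequence $\delta_n\downarrow 0$, take the $\textnormal{ACL}$ representative $\widetilde f$ from Proposition~\ref{lem:lemmaAC}, and set
$\langle v,f^j(x)\rangle:=\lim_{n\in\omega}\delta_n^{-1}\bigl(\widetilde f_v(x+\delta_n e_j)-\widetilde f_v(x)\bigr)$
wherever the metric partial derivative $\textnormal{m}\partial_j\widetilde f(x)$ exists. The ultralimit exists for every $v$ simultaneously because the quotients are bounded by $\|v\|$ times the metric difference quotient; linearity of ultralimits makes $f^j(x)$ linear, with $\|f^j(x)\|\le \textnormal{m}\partial_j\widetilde f(x)\le g(x)$; and for each fixed $v$ the ultralimit agrees almost everywhere with the genuine weak derivative $\partial_j\widetilde f_v$, the exceptional null set being allowed to depend on $v$, which suffices for weak* measurability and for the Gelfand-integral identity \eqref{eq:def-w*}. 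This construction, which defines $f^j(x)$ on all of $V$ at once rather than extending from a separable piece, is the idea your proposal is missing.
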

Indeed $W^{1,p}_*(\Omega, V^*)$ and $W^{1,p}(\Omega; V^*)$ are not only equal as sets but also that the corresponding Sobolev norms are equivalent, see Section~\ref{sec:uniq}. 
Concerning the metric Sobolev space $W^{1,p}_*(\Omega; X)$ one can now choose an isometric embedding $\iota \colon X\to V^*$ into a dual Banach space $V^*$ (e.g.\ $\ell^\infty(X)$), define
\begin{equation}
\label{eq:def-w*-met}
    W^{1,p}_*(\Omega; X):=\left\{u \colon \Omega \to X \ | \ \iota \circ u \in W^{1,p}_*\left(\Omega; V^*\right) \right\}.
\end{equation}
and deduce from Theorem~\ref{thm:main} that $W^{1,p}_*(\Omega; X)=W^{1,p}(\Omega; X)$.

In \cite{CJP22} and \cite{CE:2021} the equality of $W^{1,p}_*(\Omega, V^*)$ and $W^{1,p}(\Omega; V^*)$ was shown if the predual $V$ is separable. Indeed every separable metric space $X$ embeds isometrically into the dual-to-separable Banach space $\ell^{\infty}(\mathbb{N})$, and for such $X$ a consistent definition of $W^{1,p}_*(\Omega; X)$ in terms of $W^{1,p}_*(\Omega; \ell^{\infty}(\mathbb{N}))$ was already proposed in \cite{CE:2021}. This approach however had the aesthetic drawback that first one had to define $W^{1,p}_*(\Omega; V^*)$ for dual-to-separable Banach spaces (which are usually not separable themself) as to then obtain a definition of $W^{1,p}_*(\Omega; X)$ which only worked for separable $X$.

The inclusion $W^{1,p}_*(\Omega; V^*)\subset W^{1,p}(\Omega; V^*)$ was already shown for arbitrary dual spaces in \cite{CE:2021}. The proof of the other inclusion for dual-to-separable ones in \cite{CJP22, CE:2021} however relies on the almost everywhere weak* differentiability of absolutely continuous curves. In particular the proof of the equality therein does not apply for arbitrary dual spaces since e.g.\ the Kuratowski embedding $\kappa\colon [0,1] \to \ell^{\infty}\left([0,1]\right)$ is nowhere pointwise weak* differentiable, see Example~\ref{ex:not-diff} below. In Section~\ref{sec:pointwise} we will introduce a class of dual spaces which generalizes dual-to-separable ones as well spaces with the Radon-Nikodým-property, and characterizes the almost everywhere weak* differentiability of absolutely continuous curves, see Definition~\ref{def:leb-dens} below. 
\subsection{Organization}
First in Section~\ref{sec:prel} we set up notation and recall some standard results concerning measurability, integrals and metric valued Sobolev maps. The main part is then Section~\ref{sec:weak-weak*}. First in Subsections~\ref{sec:hard_inc} and~\ref{sec:triv_inc} we prove Theorem~\ref{thm:main}. Then in Subsection~\ref{sec:uniq} we discuss the nonuniqueness of weak weak* derivatives and how to define a suitable Sobolev norm on $W^{1,p}_*(\Omega ;V^*)$. In Subsection~\ref{sec:meas-norm} we discuss the mentioned problem concerning the measurability of $x\mapsto \|f^j(x)\|$ for weak weak* derivatives $f^j$. Finally in Subsection~\ref{sec:pointwise} we investigate the pointwise almost everywhere weak* differentability and introduce a condition that guarantees it for absolutely continuous curves.
\subsection{Acknowledgements}
The first-named author wants to thank Matthew Romney and Elefterios Soultanis for helpful discussions.
In turn, the second-named author is grateful to Sergey Basalaev and Konstantin Storozhuk for fruitful comments.
\section{Preliminaries}
\label{sec:prel}
Throughout this section let $\Omega \subset\mathbb{R}^n$ be a bounded domain, $p\in [1,\infty]$, $V$ be a Banach space and $X$ be a complete metric space.
\subsection{Measurability and integrals}
\label{subsec:meas-int}
We call a function $f\colon \Omega \to X$ \emph{measurable} if it is measurable with respect to the Borel $\sigma$-algebra on $X$ and the $\sigma$-algebra of Lebesgue measurable sets on $\Omega$. For $p\in [1,\infty)$ (resp.\ $p=\infty$) we denote by $L^p(\Omega)$ the space of measurable functions $f\colon \Omega \to \mathbb{R}$ for which the Lebesgue integral $\int_\Omega |f(x)|^p\ \textnormal{d}x$ (resp.\ the essential supremum of $|f|$) is finite. The \emph{upper integral} of a function $f\colon \Omega \to \mathbb{R}$ is
\begin{equation}
    \upint_\Omega f(x)\ \textnormal{d}x:=\inf_{\substack{h\in L^1(\Omega)\\ f\leq  h}}\ \int_\Omega h(x)\ \textnormal{d}x.
\end{equation}

We say that $f\colon \Omega \to X$ is  \emph{essentially separably valued} if there is a Lebesgue nullset $N\subset \Omega$ such that $f(\Omega \setminus N)$ is separable. If $f\colon \Omega \to X$ is measurable and essentially separably valued then $f$ is approximately continuous at almost every $x\in \Omega$, see Theorem~2.9.13 in \cite{Fed:65}. The latter means that for every $\varepsilon >0$ one has
\[
\lim_{\delta\downarrow 0}\frac{\mathcal{L}^n\left(\left\{y \in B(x,\delta)\ | \  d(f(y),f(x))\geq \varepsilon \right\}\right)}{\mathcal{L}^n\left(B(x,\delta)\right)}=0.
\] Indeed it is consistent with ZFC to assume that every measurable $f\colon \Omega \to X$ is essentially separably valued, see Sections~2.1.6 and~2.3.6 in~\cite{Fed:65}.

We denote by $L^p(\Omega; X)$ the space of measurable essentially separably valued functions $f\colon \Omega \to X$ such that $x\mapsto d(f(x), z_0)$ 
defines an element of $L^p(\Omega)$ for some (or equivalently every) $z_0\in X$. To functions $f\in L^1(\Omega; V)$ one can associate a vector in $V$ which is called the \emph{Bochner integral} and denoted by $\int_\Omega f(x) \ \textnormal{d}x$. The idea of the construction is to set 
\begin{equation}
    \int_\Omega f(x) \ \textnormal{d}x=\sum_{i=1}^n \mathcal{L}^n(A_i)\cdot v_i
\end{equation}
for simple functions of the form $f(x)=\sum_{i=1}^n\chi_{A^i}(x)\cdot v_i$, and then prove that functions in $L^1(\Omega; V)$ can be suitable approximated by simple functions such that the Bochner integrals of these converge. See \cite[Section~3.2]{HKST2015} for the details. 

For $f\in L^1(\Omega ;V)$ and $E\subset \Omega$ with $\mathcal{L}^n(E)>0$ we define 
\begin{equation}
    \dashint_E f(x) \ \textnormal{d}x:=\frac{1}{\mathcal{L}^n(E)}\cdot \int_E f(x)\ \textnormal{d}x=\frac{1}{\mathcal{L}^n(E)}\cdot \int_{\Omega} \chi_E(x)\cdot f(x)\ \textnormal{d}x.
\end{equation}
We will need the following observation.
\begin{lem}
\label{lem:leb-dens}
    Let $f\in L^\infty(\Omega;V)$ and $0<C\leq 1$. Further let $x\in \Omega$ be a point of approximate continuity of $f$ and $(E^\delta)_{\delta>0}$ be a family of measurable sets such that  $E^\delta\subset B(x,\delta)$ and 
    $\mathcal{L}^n\left(E^\delta\right)\geq C \cdot \mathcal{L}^n(B(x,\delta))$.  Then \[
    \dashint_{E^\delta}f(y) \ \textnormal{d}y \to f(x) \quad \textnormal{ as }\delta \to 0.
    \]
\end{lem}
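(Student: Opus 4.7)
The plan is to reduce the vector-valued statement to a scalar statement about $\|f(y)-f(x)\|$ via the standard Bochner-integral triangle inequality, and then handle the scalar estimate by the usual good-set/bad-set decomposition, where approximate continuity controls the measure of the bad set and the density hypothesis $\mathcal{L}^n(E^\delta)\geq C\mathcal{L}^n(B(x,\delta))$ allows us to normalize this measure by $\mathcal{L}^n(E^\delta)$ rather than $\mathcal{L}^n(B(x,\delta))$.

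More concretely, I would first write
\[
\left\|\dashint_{E^\delta}f(y)\,\textnormal{d}y-f(x)\right\|=\left\|\dashint_{E^\delta}(f(y)-f(x))\,\textnormal{d}y\right\|\leq \dashint_{E^\delta}\|f(y)-f(x)\|\,\textnormal{d}y,
\]
using that $f$ lies in $L^\infty(\Omega;V)\subset L^1(\Omega;V)$, so Bochner integration and its triangle inequality are available. Fix $\varepsilon>0$ and split
\[
E^\delta=G^\delta_\varepsilon\cup B^\delta_\varepsilon,\qquad B^\delta_\varepsilon:=\{y\in E^\delta\,:\,\|f(y)-f(x)\|\geq \varepsilon\}.
\]
On $G^\delta_\varepsilon$ the integrand is less than $\varepsilon$, while on $B^\delta_\varepsilon$ it is bounded by $\|f(x)\|+\|f\|_{L^\infty}\leq 2\|f\|_{L^\infty}$. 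Hence
\[
\dashint_{E^\delta}\|f(y)-f(x)\|\,\textnormal{d}y\leq \varepsilon+2\|f\|_{L^\infty}\cdot\frac{\mathcal{L}^n(B^\delta_\varepsilon)}{\mathcal{L}^n(E^\delta)}.
\]

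Now the key step: since $B^\delta_\varepsilon\subset\{y\in B(x,\delta)\,:\,\|f(y)-f(x)\|\geq \varepsilon\}$ and $\mathcal{L}^n(E^\delta)\geq C\cdot\mathcal{L}^n(B(x,\delta))$,
\[
\frac{\mathcal{L}^n(B^\delta_\varepsilon)}{\mathcal{L}^n(E^\delta)}\leq \frac{1}{C}\cdot \frac{\mathcal{L}^n\!\left(\{y\in B(x,\delta)\,:\,\|f(y)-f(x)\|\geq\varepsilon\}\right)}{\mathcal{L}^n(B(x,\delta))},
\]
and the right-hand side tends to $0$ as $\delta\downarrow 0$ by approximate continuity of $f$ at $x$ (noting that the metric on $V$ is exactly $d(v,w)=\|v-w\|$). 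Taking $\limsup_{\delta\to 0}$ and then $\varepsilon\to 0$ yields the conclusion.

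I do not expect a real obstacle here: the argument is a weighted variant of the classical proof that the Lebesgue-density of an approximate continuity point equals the function's value. The only place one has to be mildly careful is ensuring the triangle inequality for the Bochner integral and the measurability of $y\mapsto\|f(y)-f(x)\|$ (which follows from $f$ being measurable and essentially separably valued, as part of the definition of $L^\infty(\Omega;V)$ recalled in Subsection~\ref{subsec:meas-int}); the density constant $C$ enters only through the harmless factor $1/C$.
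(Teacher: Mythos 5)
Your proof is correct; the paper explicitly leaves this lemma to the reader as ``straightforward,'' and your good-set/bad-set decomposition with the Bochner triangle inequality and the factor $1/C$ from the density hypothesis is exactly the intended standard argument. The two points you flag (measurability of $y\mapsto\|f(y)-f(x)\|$ via essential separable valuedness, and the availability of the Bochner integral) are handled appropriately, so nothing is missing.
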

 The straightforward proof of Lemma~\ref{lem:leb-dens} is left to the reader. Since every function $f\in L^\infty(\Omega;V)$ is approximately continuous almost everywhere, Lemma~\ref{lem:leb-dens} provides a variant of the Lebesgue density theorem for Banach valued functions and the Bochner integral.

We denote by $V^*$ the dual Banach space of $V$ and by $\wlim$ the convergence of dual vectors with respect to the weak* topology on $V^*$. For  $v\in V$, $\Lambda\in V^*$ and $f\colon \Omega \to V^*$ we define $\langle v , \Lambda \rangle :=\Lambda(v)$ and the function $f_v\colon \Omega \to \mathbb{R}$ by \[f_v(x):=\langle v, f(x)\rangle.\]
A map $f\colon \Omega \to V^*$ is called \emph{weak* measurable} if $f_v$ is measurable for every $v\in V$. For $p\in [1,\infty)$ (resp.\ $p=\infty$) the space $L^p_*(\Omega; V^*)$ is defined as the collection of those weak* measurable functions $f\colon \Omega \to V^*$ such that the upper integral $\upint_\Omega \|f(x)\|^p\ \textnormal{d}x$ (resp.\ the essential supremum of $\|f\|$) is finite. The \emph{Gelfand integral} of a function $f\in L^1_*(\Omega; V^*)$ is the dual vector $\int_\Omega f(x)\ \textnormal{d}x\in V^*$ which is uniquely defined by the equality
\begin{equation}
    \left\langle v , \int_\Omega f(x) \ \textnormal{d}x\right\rangle =\int_\Omega f_v(x)\ \textnormal{d}x
\end{equation}
for every $v\in V$. Note that Bochner integral and Gelfand integral coincide whenever $f\in L^1(\Omega; V^*)$, and hence our choice of notation does not create ambiguity. For more details the reader is referred to \cite[Section~2.2]{CE:2021} and the references therein.
\subsection{Definitions of metric valued Sobolev maps}
The abundance of different approaches concerning the definition of $W^{1,p}(\Omega;X)$ stems from the search for the 'right' generalization when the domain~$\Omega$ is a singular metric measure space. However if $\Omega \subset \mathbb{R}^n$ is a bounded domain and $p\in (1,\infty)$ all of these are equivalent, see \cite[Chapter 10]{HKST2015}, \cite{Reshetnyak2004} and \cite{AGS:13}. In this article we will stick to the following definition which goes back to Reshetnyak \cite{Reshetnyak97, Reshetnyak2004}.
\begin{defn}
\label{definition:WX-space}
The space $W^{1,p}(\Omega; X)$ consists of those $f\in L^p(\Omega; X)$ such that
\begin{enumerate}[(A)]
\item for every Lipschitz function $\varphi:X\to\mathbb R$  one has $\varphi \circ f \in W^{1,p}(\Omega)$, and
\item there is a function $g \in L^p(\Omega)$ such that for every Lipschitz $\varphi\colon X\to \mathbb{R}$ one has
\[ |\nabla (\varphi\circ f)(x)| \leq \operatorname{Lip}(\varphi)\cdot g(x) \quad \textnormal{for a.e.\ }x\in \Omega.\]
\end{enumerate}
A function $g$ as in (B) is called a Reshetnyak upper gradient of $f$.
\end{defn}
Notice that if $Y$ is another complete metric space and $\iota \colon X\to Y$ an isometric embedding then 
\begin{equation}
\label{eq:amb-indep}
    W^{1,p}(\Omega ;X)=\left\{f\colon \Omega \to X \ | \ \iota \circ f \in W^{1,p}(\Omega ;Y)\right\}.
\end{equation}
This is implied by the McShane extension lemma, see e.g.\ \cite[Section 1.19]{Hei:03}. In particular once we have proven Theorem~\ref{thm:main} it follows that the definition of $W^{1,p}_*(\Omega ; X)$ given in \eqref{eq:def-w*-met} does not depend on the chosen isometric embedding $\iota$ of $X$ into a dual Banach space. For example the following universal constructions are always possible.
\begin{ex}\label{ex:emb}
    \begin{enumerate}[(i)]
\item \label{it:emb-kur} We denote $\ell^p(X):=L^p(X,\mathcal{P}(X), \#)$ where $\#$ is the counting measure on $X$. Then \cite[Theorem~243G]{Fre03} implies that $\ell^\infty(X)$ is the dual space of $\ell^1(X)$. One can now choose a basepoint $x_0\in X$ and define the Kuratowski embedding $\kappa^{X,x_0} \colon X\to \ell^\infty(X)$ by setting 
\begin{equation}
    \left(\kappa^{X,x_0}(x)\right)(y):=d(x,y)-d(y,x_0).
\end{equation}
It is straightforward to check that this well-defined and an isometric embedding, see e.g.\ \cite[p.\ 11]{Hei:03}. If $X$ is bounded one can avoid the choice of a basepoint and define $\kappa^X\colon X \to \ell^{\infty}(X)$ by setting $(\kappa^X(x))(y):=d(x,y)$. 
\item \label{it:emb-free} The following other universal construction is closely related to Lipschitz free spaces as studied e.g.\ in \cite{Wea99, GK03, DKO20}. We choose a basepoint $x_0\in X$ and denote by $\textnormal{Lip}_{x_0}(X)$ the space of Lipschitz functions $f\colon X\to\mathbb{R}$ such that $f(x_0)=0$. $\textnormal{Lip}_{x_0}(X)$ is endowed with the norm which associates to each $f$ its optimal Lipschitz constant. We denote $\mathcal{F}(X,x_0):=\left(\textnormal{Lip}_{x_0}(X)\right)^*$. Then an isometric embedding $\Psi^{X,x_0}\colon X \to \mathcal{F}(X,x_0)$ can be defined by setting
\begin{equation}
    \left\langle f, \Psi^{X,x_0}(x)\right\rangle:=f(x)
\end{equation}
for $f\in \textnormal{Lip}_{x_0}(X)$ and $x\in X$.
\end{enumerate}
\end{ex}
Actually Definition~\ref{definition:WX-space} is a slight variation on the original definitions of the Sobolev space $W^{1,p}(\Omega;X)$ that Reshetnyak proposed in \cite{Reshetnyak97} and \cite{Reshetnyak2004}. Although all three are equivalent when $X$ is separable, there are in general slight differences which are discussed by the subsequent remark.
\begin{rem}
\label{rem:resh-def}
\begin{enumerate}[(i)]
\item \label{it:rem-resh-def-new}
In \cite{Reshetnyak2004} the only difference compared to Definition~\ref{definition:WX-space} is that Reshetnyak does not assume a priori that $f\in L^p(\Omega;X)$. However the measurability of $f$ is implied from (A) since the distance function to any given closed set is $1$-Lipschitz. Thus, as discussed in Section~\ref{subsec:meas-int}, it is consistent with ZFC to assume that the two definitions are equivalent.
\item In his previous article \cite{Reshetnyak97} Reshetnyak restricts to the postcomposition with distance functions of the form $\varphi=d(x,-)$. For the general case of nonseparable target spaces this does not seem to be a suitable definition. E.g.\ with such definition any injective map into a discrete metric space $X$ would be of Sobolev regularity and an ambient space independence as observed in \eqref{eq:amb-indep} would no longer hold true.
\end{enumerate}
\end{rem}
\subsection{Metric derivatives}
The \emph{length} of a continuous curve $\gamma \colon (a,b)\to X$ is defined as 
\begin{equation}
    \ell(\gamma):=\sup_{a<t_1<\dots<t_k<b}\ \sum_{i=1}^{k-1} d(\gamma(t_i),\gamma(t_{i+1})).
\end{equation}
The curve $\gamma$ is called \emph{rectifiable} if $\ell(\gamma)$ is finite. 
It is called \emph{absolutely continuous} if it is rectifiable and 
$t\mapsto \ell(\gamma|_{(a,t)})$ defines an absolutely continuous 
function $(a,b)\to \mathbb{R}$. We say that $\gamma$ is \emph{metrically differentiable} at $t_0\in (a,b)$ if 
\begin{equation}
    \textnormal{md}\gamma(t_0):= \lim_{t\to t_0}\frac{d(\gamma(t),\gamma(t_0))}{|t-t_0|}\in [0,\infty)
\end{equation}
exists. For a proof of the following lemma see e.g.\ \cite[Proposition 4.4.25]{HKST2015}.
\begin{lem}
    Let $\gamma \colon (a,b)\to X$ be absolutely continuous. Then $\gamma$ is metrically differentiable at almost every $t\in(a,b)$, the almost everywhere defined function $t\mapsto \textnormal{md}\gamma(t)$ is measurable and 
    \begin{equation}
        \ell(\gamma)=\int_a^b \textnormal{md} \gamma (t)\ \textnormal{d}t.
    \end{equation}
\end{lem}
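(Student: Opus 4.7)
The plan is to follow the standard argument due to Kirchheim, exploiting that the range of $\gamma$ is separable (as the continuous image of an interval) in order to reduce metric differentiability to ordinary differentiability of countably many real-valued auxiliary functions. First I would pick a countable dense subset $\{x_n\}\subset \gamma((a,b))$ and set $\phi_n(t):=d(\gamma(t),x_n)$. Since $|\phi_n(t)-\phi_n(s)|\leq d(\gamma(t),\gamma(s))\leq \ell(\gamma|_{(s,t)})$, each $\phi_n$ is absolutely continuous, hence classically differentiable almost everywhere. Let $N\subset (a,b)$ be a nullset off which all the $\phi_n'$ and also $L'(t):=\frac{d}{dt}\ell(\gamma|_{(a,t)})$ exist, and define
\begin{equation}
m(t):=\sup_{n}|\phi_n'(t)|\qquad \text{for }t\in(a,b)\setminus N.
\end{equation}
Then $m$ is measurable as a countable supremum of measurable functions.

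Next I would establish the key identity
\begin{equation}
d(\gamma(t),\gamma(s))=\sup_n|\phi_n(t)-\phi_n(s)|,
\end{equation}
where $\leq$ is the reverse triangle inequality and $\geq$ follows by picking a subsequence $x_{n_k}\to \gamma(s)$. Combining this with the absolute continuity of each $\phi_n$ gives
\begin{equation}
d(\gamma(t),\gamma(s))\leq \int_s^t|\phi_n'(\tau)|\,\textnormal{d}\tau\leq \int_s^t m(\tau)\,\textnormal{d}\tau
\end{equation}
after taking the sup over $n$. Summing over partitions then yields $\ell(\gamma)\leq \int_a^b m(t)\,\textnormal{d}t$. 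Conversely, from $|\phi_n(t)-\phi_n(s)|\leq L(t)-L(s)$ one gets $|\phi_n'|\leq L'$ almost everywhere, so $m\leq L'$ almost everywhere and hence $\int_a^b m\leq \int_a^b L'=\ell(\gamma)$. This pins down the length formula (once we verify $m=\textnormal{md}\gamma$ a.e.) and also forces $m=L'$ a.e.

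Finally, at every point $t_0\notin N$ which is additionally a Lebesgue point of $m$ (a full measure set), I would prove that the metric derivative exists and equals $m(t_0)$. The upper estimate comes directly from the inequality above:
\begin{equation}
\limsup_{t\to t_0}\frac{d(\gamma(t),\gamma(t_0))}{|t-t_0|}\leq \lim_{t\to t_0}\frac{1}{|t-t_0|}\left|\int_{t_0}^{t}m(\tau)\,\textnormal{d}\tau\right|=m(t_0).
\end{equation}
For the lower estimate, since $|\phi_n(t)-\phi_n(t_0)|\leq d(\gamma(t),\gamma(t_0))$ for every $n$, dividing by $|t-t_0|$, letting $t\to t_0$ and taking the supremum in $n$ gives
\begin{equation}
m(t_0)\leq \liminf_{t\to t_0}\frac{d(\gamma(t),\gamma(t_0))}{|t-t_0|}.
\end{equation}
Hence $\textnormal{md}\gamma(t_0)$ exists and equals the measurable function $m(t_0)$, and the asserted length formula is already recorded above.

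The main obstacle is the lower estimate, which is where genuine information about the metric structure enters: the upper bound is essentially a one-sided Lipschitz comparison, whereas for the lower bound one needs the density of $\{x_n\}$ together with the reverse triangle inequality in order to recover $d(\gamma(t),\gamma(t_0))$ as a supremum of real-valued increments. Everything else reduces to classical absolute continuity on the line.
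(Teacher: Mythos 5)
Your argument is correct and is essentially the standard Ambrosio--Kirchheim proof that the paper itself does not reproduce but cites (Proposition 4.4.25 in \cite{HKST2015}): reduce to the countable family $\phi_n(t)=d(\gamma(t),x_n)$, show $m=\sup_n|\phi_n'|$ is measurable, dominated by the derivative of $t\mapsto\ell(\gamma|_{(a,t)})$, and equal to the metric derivative at Lebesgue points. All steps, including the key identity $d(\gamma(t),\gamma(s))=\sup_n|\phi_n(t)-\phi_n(s)|$ and the two-sided comparison at Lebesgue points, are carried out correctly.
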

We say that $f\colon \Omega \to \mathbb{R}^n$ has a \emph{$j$-th metric partial derivative} at $x\in \Omega$ if 
\begin{equation}
    \textnormal{m}\partial_j f(x):=\lim_{h\to 0} \frac{d(f(x+h\cdot e_j),f(x))}{|h|}\in [0,\infty)
\end{equation}
exists. 
A map $f\colon \Omega \to X$ is \emph{absolutely continuous on almost every line segment parallel to the coordinate axes} if for every $j=1,\dots, n$ there is a family of lines $L$, which are parallel to the $x_j$-axis, such that $\mathcal{L}^n(\Omega\setminus\bigcup L)=0$ and for every $l\in L$ the map $f$ is absolutely continuous when restricted to a line segment contained in $l\cap \Omega$. In this case we write $f\in \textnormal{ACL}(\Omega ;V)$. It turns out that if $f\in \textnormal{ACL}(\Omega ;V)$ is measurable then $\textnormal{m}\partial_j f(x)$ exists at almost every $x\in \Omega$ and the almost everywhere defined function $x\mapsto \textnormal{m}\partial_j f(x)$ is measurable, see \cite[Lemma 2.4]{CJP22}.

We call $f\colon \Omega \to X$ a \emph{representative} of $g\colon \Omega \to X$ and write $f\equiv g$ if $f(x)=g(x)$ for almost every $x\in \Omega$. In particular $f\equiv g$ implies that $f$ is measurable if and only if $g$ is measurable. It turns out that $\gamma\in W^{1,1}((a,b),X)$ if and only $\gamma$ has a representative that is absolutely continuous. In higher dimensions we have the following implication (compare \cite[Theorem 3.1]{CJP22} and \cite[Lemma 2.13]{HT:08}).
\begin{prop}
\label{lem:lemmaAC}
Let $f\in W^{1,p}(\Omega; X)$. Then $f$ has a representative $\widetilde{f}\in \textnormal{ACL}(\Omega;X)$. 
Moreover, for every Reshetnyak upper gradient $g$ of $f$ one has
\begin{equation}\label{eq:lemmaACest}
\textnormal{m}\partial_j \widetilde{f}(x)\leq g(x)
\end{equation}
for almost every $x\in \Omega$.
\end{prop}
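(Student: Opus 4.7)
The plan is to reduce to a countable collection of scalar Sobolev functions via postcomposition with distance functions, apply the classical scalar ACL characterization to each of these, and then glue the resulting line-by-line information back together using separability of the essential image and the density of base points.

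First I would use that $f\in L^p(\Omega;X)$ is essentially separably valued to replace $X$ by the closure of its essential image, so that without loss of generality $X$ is separable. Fix a countable dense set $\{x_k\}_{k\in\mathbb{N}}\subset X$, a basepoint $x_0\in X$, and let $\varphi_k(z):=d(z,x_k)-d(x_0,x_k)$. Each $\varphi_k$ is $1$-Lipschitz, so by Definition~\ref{definition:WX-space} each $u_k:=\varphi_k\circ f$ lies in $W^{1,p}(\Omega)$ with $|\nabla u_k|\leq g$ almost everywhere. Applying the classical ACL characterization of real-valued Sobolev functions, for each $k$ I obtain a representative $h_k\equiv u_k$ that is absolutely continuous on almost every line parallel to each coordinate axis and satisfies $|\partial_j h_k|\leq g$ a.e. for each $j=1,\dots,n$.

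Next, for each fixed $j$ I combine Fubini's theorem with the countability of $\{x_k\}$ to obtain a family $\mathcal{L}_j$ of lines parallel to $e_j$ whose union has full measure in $\Omega$ and on each line $\ell\in\mathcal{L}_j$ the following hold simultaneously for every $k\in\mathbb{N}$: (i) $h_k|_\ell$ is absolutely continuous, (ii) $h_k(\ell(t))=\varphi_k(f(\ell(t)))$ for a.e.\ $t$, and (iii) $g\circ\ell\in L^p$ with $|(h_k\circ\ell)'(t)|\leq g(\ell(t))$ a.e. The crucial point is the identity
\begin{equation}
d(y,z)=\sup_{k}|d(y,x_k)-d(z,x_k)|=\sup_k|\varphi_k(y)-\varphi_k(z)|\qquad (y,z\in X),
\end{equation}
which follows from density of $\{x_k\}$. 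Combining this with (ii) and (iii), for a.e.\ pair $s<t$ on $\ell$,
\begin{equation}
d(f(\ell(s)),f(\ell(t)))=\sup_k|h_k(s)-h_k(t)|\leq \int_s^t g(\ell(r))\,\textnormal{d}r.
\end{equation}
Hence $t\mapsto f(\ell(t))$ is uniformly continuous on its domain of definition and extends uniquely to an absolutely continuous curve $\widetilde{f}_\ell\colon \ell\to X$ satisfying the same estimate, and passing to the limit $t\to t_0$ at Lebesgue points of $g\circ\ell$ yields $\textnormal{md}\widetilde{f}_\ell(t_0)\leq g(\ell(t_0))$ for a.e.\ $t_0$.

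The remaining step, which I expect to be the main obstacle, is to assemble the line extensions into a single globally defined representative $\widetilde{f}$ that lies in $\textnormal{ACL}(\Omega;X)$ for every direction at once, rather than a family of direction-dependent ACL-functions. My preferred route is to let $E\subset\Omega$ be the null set where $f$ disagrees with $\widetilde{f}_\ell$ for at least one good line $\ell$ through $x$ (in any coordinate direction); set $\widetilde{f}:=f$ on $\Omega\setminus E$, and on $E$ use the line extension $\widetilde{f}_\ell$ along a single chosen direction. By a double Fubini argument, for every $j$ the exceptional set $E$ intersects almost every line $\ell\in\mathcal{L}_j$ in a set of one-dimensional measure zero, so on such $\ell$ the function $\widetilde{f}|_\ell$ agrees a.e.\ with the absolutely continuous curve $\widetilde{f}_\ell$; since both sides of this coincidence have at most countable jump discontinuities controlled by $\int g$, an $\varepsilon$--$\delta$ argument upgrades a.e.\ equality on $\ell$ to everywhere equality, yielding $\widetilde{f}\in\textnormal{ACL}(\Omega;X)$. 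The pointwise upper gradient bound $\textnormal{m}\partial_j\widetilde{f}(x)\leq g(x)$ at almost every $x\in\Omega$ then follows immediately from the line estimate and the Lebesgue differentiation theorem applied to $g$ along each good line.
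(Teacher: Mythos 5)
The paper does not actually write out a proof of this proposition; it is imported from \cite[Theorem 3.1]{CJP22} and \cite[Lemma 2.13]{HT:08}, whose arguments follow essentially the route you take. Your reduction to separable $X$, the passage to the countable family $u_k=\varphi_k\circ f$ with $|\nabla u_k|\le g$, the simultaneous choice of good lines via Fubini, and the line-wise estimate $d(f(\ell(s)),f(\ell(t)))=\sup_k|h_k(\ell(s))-h_k(\ell(t))|\le\int_s^t g(\ell(r))\,\textnormal{d}r$ on a co-null subset of each good line are all correct, as is the resulting existence of the absolutely continuous extensions $\widetilde{f}_\ell$ (completeness of $X$ is used here) and the Lebesgue-point bound on the metric derivative along each line.

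The genuine gap is the gluing step, exactly where you anticipated trouble. If you set $\widetilde{f}:=f$ off a null set $E$ and patch with $\widetilde{f}_\ell$ along one chosen direction on $E$, then on a good line $\ell'$ in a \emph{different} direction you only know $\widetilde{f}|_{\ell'}=\widetilde{f}_{\ell'}$ almost everywhere; a function that agrees a.e.\ on a line with a continuous function need not agree with it everywhere (modify a continuous function at one point), and your claim that $\widetilde{f}|_{\ell'}$ has ``at most countable jump discontinuities controlled by $\int g$'' has no justification -- on $\ell'\cap E$ the patched values come from transversal lines and bear no relation to the values of $\widetilde f_{\ell'}$. There is also a measurability issue for $E$ as you define it. The fix uses an ingredient you already have: each $h_k$ is a \emph{single} ACL representative of $u_k$ valid in all coordinate directions simultaneously, so on every good line $\ell$ in every direction the continuous functions $\varphi_k\circ\widetilde{f}_\ell$ and $h_k\circ\ell$ agree a.e.\ and hence everywhere. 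Consequently the line extensions are automatically consistent across directions, and one should define $\widetilde{f}(x)$ to be the unique $y\in X$ with $\varphi_k(y)=h_k(x)$ for all $k$ whenever such $y$ exists (uniqueness follows from $d(y,z)=\sup_k|\varphi_k(y)-\varphi_k(z)|$; existence holds at every point of every good line, where $y=\widetilde{f}_\ell(x)$, and on the full-measure set where $u_k=h_k$ for all $k$, where $y=f(x)$), and $\widetilde{f}(x):=f(x)$ elsewhere. This $\widetilde{f}$ is a representative of $f$ and restricts to $\widetilde{f}_\ell$ \emph{identically} on every good line in every direction, which yields $\widetilde{f}\in\textnormal{ACL}(\Omega;X)$; combined with your Lebesgue-point estimate and the measurability of $x\mapsto\textnormal{m}\partial_j\widetilde{f}(x)$ from \cite[Lemma 2.4]{CJP22}, a Fubini argument gives \eqref{eq:lemmaACest}.
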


\section{Weak weak* derivatives}
\label{sec:weak-weak*}
Throughout this section let $\Omega \subset \mathbb{R}^n$ be a bounded domain, $p\in [1,\infty]$ and $V^*$ be a dual Banach space. 
\subsection{The inclusion $W^{1,p}(\Omega ;V^*)\subset W^{1,p}_*(\Omega ;V^*)$}\label{sec:hard_inc} 
This part relies on the notion of ultralimits. 
An \emph{ultrafilter} on $\mathbb{N}$ is a finitely additive measure 
$\omega \colon \mathcal{P}(\mathbb{N})\to \{0,1\}$ such that $\omega(\mathbb{N})=1$.
An ultrafilter is called \emph{nonprincipal} if $\omega(S)=0$ for every finite set $S$. The axiom of choice implies that a nonprincipal ultrafilter $\omega$ on $\mathbb{N}$ exists (see \cite[Corollary 2.6.2]{Gol98}) and we fix such for the remainder of this section. For a sequence $(a_n)_{n\in \mathbb{N}}$ of real numbers we say that $a\in \mathbb{R}$ is the \emph{$\omega$-limit} of $(a_n)$ and write $\lim_{n\in \omega} a_n=a$ if
\begin{equation}
    \omega\left(\left\{m\in \mathbb{N} \ \colon \ a_m \in (a-\epsilon, a+ \epsilon)\right\}\right)=1
\end{equation}
for every $\epsilon >0$. It turns out that $\lim_{n\in \Omega}a_n$ exists for every bounded sequence $(a_n)\subset \mathbb{R}$ (compare e.g.\ \cite[Proposition 4.4]{AKP19}) and that \[
\lim_{n\in \Omega}a_n=\lim_{n\to \infty} a_n
\]
if the latter exists. Furthermore, if $(a_n)$ and $(b_n)$ are bounded sequences of real numbers it is easy to check that
\[
\lim_{n\in \Omega}(a_n+b_n)=\lim_{n\in \Omega} a_n+\lim_{n\in \Omega} b_n \quad \textnormal{and} \quad \lim_{n\in \Omega}\left(a_n\cdot b_n\right)=\left(\lim_{n\in \Omega} a_n\right)\cdot\left(\lim_{n\in \Omega} b_n\right).
\]
\begin{thm}
\label{theorem:main}
One has 
\begin{equation}
    W^{1,p}(\Omega ; V^*)\subset W_*^{1,p}(\Omega ; V^*).
\end{equation}
Furthermore, if $g\in L^p(\Omega)$ is an Reshetnyak upper gradient of $f\in W^{1,p}(\Omega ; V^*)$ then $f$ has a $j$-th partial weak weak* derivative $f^j\in L^p_*(\Omega ;V^*)$ such that $\|f^j(x)\|\leq g(x)$ for almost every $x\in \Omega$.
\end{thm}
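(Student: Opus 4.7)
The plan is to use the $\textnormal{ACL}$ representative $\tilde{f}$ from Proposition~\ref{lem:lemmaAC} and promote its difference quotients $D_n(x):= \frac{\tilde{f}(x+h_n e_j)-\tilde{f}(x)}{h_n}$ (for some fixed $h_n\downarrow 0$) to a weak weak* derivative via the ultrafilter $\omega$. The bound $\textnormal{m}\partial_j \tilde{f}\leq g$ almost everywhere ensures that $\|D_n(x)\|$ is eventually bounded, and the ultrafilter replaces Banach--Alaoglu sequential extraction, which is unavailable when the predual $V$ is nonseparable (cf.\ Example~\ref{ex:not-diff}).

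On the full-measure set $A\subset\Omega$ where $\textnormal{m}\partial_j\tilde{f}(x)$ exists and satisfies $\textnormal{m}\partial_j\tilde{f}(x)\leq g(x)$, I define
\[
\langle v, f^j(x)\rangle := \lim_{n\in \omega} \langle v, D_n(x)\rangle \qquad (v\in V),
\]
and set $f^j(x):=0$ on $\Omega\setminus A$. For each $v$ the scalar sequence $(\langle v, D_n(x)\rangle)_n$ is eventually bounded by $\|v\|(g(x)+1)$, so the $\omega$-limit exists; additivity and homogeneity of $\omega$-limits give linearity in $v$, and monotonicity of $\omega$-limits together with $\|D_n(x)\|\to \textnormal{m}\partial_j\tilde{f}(x)$ yield
\[
|\langle v, f^j(x)\rangle|\leq \lim_{n\in \omega}\|v\|\,\|D_n(x)\| = \|v\|\,\textnormal{m}\partial_j \tilde{f}(x)\leq \|v\|\,g(x),
\]
so $f^j(x)\in V^*$ with $\|f^j(x)\|\leq g(x)$.

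Measurability is secured by condition~(A) of Definition~\ref{definition:WX-space}: for every $v\in V$ the post-composition $\tilde{f}_v=\langle v,\tilde{f}\rangle$ lies in $W^{1,p}(\Omega)$ and is $\textnormal{ACL}$ (since $\langle v,\cdot\rangle$ is $\|v\|$-Lipschitz), whence $\partial_j \tilde{f}_v(x)$ exists as an ordinary pointwise limit for a.e.\ $x$ and coincides there with the weak derivative of $\tilde{f}_v$. The $\omega$-limit therefore collapses to this ordinary limit almost everywhere, giving $\langle v, f^j(x)\rangle=\partial_j\tilde{f}_v(x)$ a.e., which is measurable; hence $f^j$ is weak* measurable, and $\|f^j\|\leq g\in L^p(\Omega)$ then forces $f^j\in L^p_*(\Omega;V^*)$. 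The integration-by-parts formula for $\tilde{f}_v\in W^{1,p}(\Omega)$ against $\varphi\in C_0^\infty(\Omega)$,
\[
\int_\Omega\frac{\partial \varphi}{\partial x_j}\,\tilde{f}_v\,dx=-\int_\Omega \varphi\,\langle v, f^j\rangle\,dx,
\]
is, by definition of the Gelfand integral and $f\equiv\tilde f$, exactly~\eqref{eq:def-w*} tested against $v$; since $v\in V$ is arbitrary the vector identity in $V^*$ follows.

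The main obstacle is precisely the potential nonmeasurability of ultrafilter limits: $\omega$-limits of measurable functions are not generically measurable, and without separability of $V$ one cannot avoid the ultrafilter by sequential extraction. The rescue comes from the rigidity imposed by condition~(A), which reduces the $\omega$-limit to an ordinary limit almost everywhere \emph{for each fixed $v$ separately}; the ultrafilter is then only doing the genuine work of packaging these testwise limits into a single element of $V^*$ at each point $x$, which is what one needs to assemble a weak*-measurable map in the absence of separability.
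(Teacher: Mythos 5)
Your proposal is correct and follows essentially the same route as the paper's own proof: the $\textnormal{ACL}$ representative from Proposition~\ref{lem:lemmaAC}, the $\omega$-limit of scalar difference quotients along a fixed sequence $h_n\downarrow 0$ to define $f^j(x)\in V^*$ pointwise with $\|f^j(x)\|\leq \textnormal{m}\partial_j\widetilde{f}(x)\leq g(x)$, and the observation that for each fixed $v$ the ultralimit collapses almost everywhere to the classical partial derivative of $\widetilde{f}_v$, which yields weak* measurability and the integration-by-parts identity. No substantive differences.
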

\begin{proof}
    Let $\widetilde{f}$ be a representative of $f$ as in Lemma~\ref{lem:lemmaAC}. We fix a sequence $(\delta_n)$ of positive reals such that $\delta_n\to 0$ as $n\to \infty$. For $x \in \Omega$ we define $f^j(x)\colon V\to \mathbb{R}$ by setting
    \begin{equation}
        \langle v, f^j(x) \rangle := \begin{cases}
\lim_{n\in \Omega} \frac{\widetilde{f}_v(x+\delta_n\cdot e_j)-\widetilde{f}_v(x)}{\delta_n} &, \textnormal{ if }\textnormal{m}\partial_j\widetilde{f}(x) \textnormal{ exists}\\
0 &, \textnormal{ otherwise.}
        \end{cases}
    \end{equation} 
    Note that 
    \begin{equation}
    \label{eq:pre-fact}
        \left|\widetilde{f}_v(x+\delta_n\cdot e_j)-\widetilde{f}_v(x)\right|\leq \|v\|\cdot\|\widetilde{f}(x+\delta_n\cdot e_j)-\widetilde{f}(x)\|.
    \end{equation}
    By \eqref{eq:pre-fact} the function $f^j(x)\colon V\to \mathbb{R}$ is well-defined and if $\textnormal{m}\partial_j\widetilde{f}(x)$ exists then
    \begin{equation}
    \label{eq:norm-est}
        \langle v , f^j(x)\rangle \leq \|v\|\cdot \textnormal{m}\partial_j\widetilde{f}(x).
    \end{equation}
    Furthermore, by the linearity of ultralimits, $f^j(x)\colon V\to \mathbb{R}$ is linear and hence $f^j(x)\in V^*$ for every $x\in \Omega$.

    Now fix $v\in V$. Then \begin{equation}
    \label{eq:pointwise-main}
    f^j_v(x)=\partial_j \widetilde{f}_v(x)
    \end{equation}
    for every $x\in \Omega$ such that $\textnormal{m}\partial_j\widetilde{f}(x)$ and $\partial_j \widetilde{f}_v(x)$ exist. In particular we have that \eqref{eq:pointwise-main} holds for almost every $x\in \Omega$. Hence $f^j_v$ is measurable and a weak derivative of $\widetilde{f}_v\in W^{1,p}(\Omega)$. 
    
    Since $v\in V$ was arbitrary, $f^j$ is weak* measurable. Furthermore by  \eqref{eq:norm-est} and Proposition~\ref{lem:lemmaAC} one has 
    \begin{equation}
        \|f^j(x)\|\leq g(x)
    \end{equation}
    for almost every $x\in \Omega$ and hence in particular $f^j\in L^p_*(\Omega ;V^*)$. Finally, that $f^j_v$ is a $j$-th weak derivative of $\widetilde{f}_v$ for every $v\in V$, implies that $f^j$ is a $j$-th weak weak* derivative of $\widetilde{f}$ and hence also of $f$.
\end{proof}
\subsection{The inclusion $W^{1,p}_*(\Omega ;V^*)\subset W^{1,p}(\Omega ;V^*)$}
\label{sec:triv_inc}
This inclusion already follows from the proof of Theorem~1.4 in~\cite{CE:2021} However, in \cite{CE:2021} we worked with a slightly more restrictive definition of $L^p_*(\Omega ;V^*)$ and hence $W^{1,p}_*(\Omega ; V^*)$. Hence we shortly recall the critical step for the readers convenience. To this end we need the following result which is a direct consequence of \cite[Proposition~3.4]{CE:2021} and \cite[Proposition~2.16]{HT:08}
\begin{prop}
\label{Prop:W*=R*}
    Let $f\in L^p(\Omega; V^*)$. Then $f\in W^{1,p}(\Omega; V^*)$ if and only if 
    \begin{enumerate}[(A*)]
\item for every $v\in V$ the function $f_v$ lies in  $W^{1,p}(\Omega)$, and
\item \label{item:WX2} there is a function $g \in L^p(\Omega)$ such that for every $v\in V$ one has
\[ |\nabla (f_v)(x)| \leq \|v\| \cdot g(x) \quad \textnormal{for a.e.\ }x\in \Omega.\]
\end{enumerate}
\end{prop}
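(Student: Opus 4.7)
The forward direction is immediate from Definition~\ref{definition:WX-space}. For each $v\in V$, the evaluation map $\Lambda_v\colon V^*\to\mathbb{R}$, $\Lambda_v(\xi):=\langle v,\xi\rangle$, is linear and Lipschitz with $\operatorname{Lip}(\Lambda_v)\leq \|v\|$. Applying condition (A) to $\varphi=\Lambda_v$ yields $f_v=\Lambda_v\circ f\in W^{1,p}(\Omega)$, and applying (B) with any Reshetnyak upper gradient $g$ of $f$ gives $|\nabla f_v(x)|\leq \|v\|\cdot g(x)$ for a.e.\ $x$. This produces (A*) and (B*) with the same $g$.

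For the backward direction, the plan is to reduce the test class from all Lipschitz functions on $V^*$ to a countable family of distance-to-point functions via \cite[Proposition~2.16]{HT:08}, and then realize those distance functions as countable suprema of linear pairings. Since $f$ is essentially separably valued, fix a countable set $\{\xi_k\}$ that is dense in $f(\Omega\setminus N)$ for some nullset $N$. For each pair $(\xi_k,\xi_l)$, Hahn--Banach (using that the predual $V$ norms $V^*$) produces, for every $\varepsilon>0$, a unit vector $v\in V$ with $\langle v,\xi_k-\xi_l\rangle\geq \|\xi_k-\xi_l\|-\varepsilon$. A diagonal argument selects a countable set $D\subset\{v\in V:\|v\|\leq 1\}$ such that
\[
\|\xi_k-\xi_l\|=\sup_{v\in D}\langle v,\xi_k-\xi_l\rangle
\]
for every $k,l$, and by density the same identity holds with $\xi_l$ replaced by any $\xi\in f(\Omega\setminus N)$.

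Now, for each fixed $k$ and each $v\in D$, conditions (A*) and (B*) give that $x\mapsto f_v(x)-\langle v,\xi_k\rangle$ lies in $W^{1,p}(\Omega)$ with $|\nabla f_v(x)|\leq g(x)$ a.e. Since $d(f(x),\xi_k)=\|f(x)-\xi_k\|$ equals the pointwise supremum of countably many such functions, the lattice property of $W^{1,p}(\Omega)$ (finite maxima preserve $W^{1,p}$ with the maximum of gradients, and the increasing monotone sup is handled by the dominated convergence in $L^p$ together with the weak compactness of bounded sets in $W^{1,p}$) yields $d(f(\cdot),\xi_k)\in W^{1,p}(\Omega)$ with $|\nabla d(f(\cdot),\xi_k)(x)|\leq g(x)$ a.e., uniformly in $k$. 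Applying \cite[Proposition~2.16]{HT:08} concludes that $f\in W^{1,p}(\Omega;V^*)$ with $g$ as a Reshetnyak upper gradient. The main subtlety is that $V$ itself need not be separable, so one cannot directly pick a countable dense subset of the unit ball; this is precisely what the essential separability of $f$ circumvents, and this reduction is exactly the content of \cite[Proposition~3.4]{CE:2021}.
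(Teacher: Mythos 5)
Your proof is correct and follows essentially the route the paper intends: the paper states the proposition as a direct consequence of \cite[Proposition~3.4]{CE:2021} and \cite[Proposition~2.16]{HT:08}, and your argument is precisely an unpacking of those two ingredients --- the reduction from all Lipschitz test functions to countably many distance functions $d(\cdot,\xi_k)$, and the realization of each such distance function as a countable supremum of the affine functions $f_v-\langle v,\xi_k\rangle$ with gradients uniformly dominated by $g$ (where the norming set $D$ is extracted from the dual-norm definition on the countable set of differences and extended by density, exactly as the essential separable-valuedness of $f$ permits). The one small caveat is that ``weak compactness of bounded sets in $W^{1,p}(\Omega)$'' is literally available only for $1<p<\infty$; for $p=1$ one should instead invoke Dunford--Pettis, which applies here because the gradients of the partial maxima are pointwise dominated by the fixed function $g\in L^1(\Omega)$.
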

To conclude that $W^{1,p}_*(\Omega ;V^*)\subset W^{1,p}(\Omega ;V^*)$ we proceed as in the first part of the proof of Proposition~3.5 in \cite{CE:2021}: Let $f\in W^{1,p}_*(\Omega ; V^*)$ and for $j\in \{1,\dots ,n\}$ choose a $j$-th weak weak* derivative $f^j\in L^p_*(\Omega; V)$ of $f$. By definition of the upper integral, there are functions $g^j\in L^p(\Omega)$ such that $\|f^j(x)\|\leq g^j(x)$ for every $x\in \Omega$. 
Since $f\in L^p(\Omega ; V^*)$ the function $f_v$ lies in $L^p(\Omega)$ for every $v \in V$ . Furthermore for $j\in \{1, \dots ,n \}$ and $v\in V$ the function $f^j_v$ is a $j$-th weak partial derivative of $f_v$. Hence $f_v \in W^{1,p}(\Omega)$ and 
\[
|\nabla f_v (x)|= \left( \sum_{j=1}^n  \left|f^j_v(x)\right|^2\right)^{1/2}
\leq \left( \sum_{j=1}^n |g^j(x)|^2\cdot ||v||^2\right)^{1/2}\leq \|v\| \cdot \max_{j\in\{1, \dots n\}}|g^j(x)|
\]
for a.e.\ $x\in \Omega$. Setting $g:=\max_{j\in \{1,\dots,n\}}\ |g^j| \in L^p(\Omega)$, we conclude from Proposition~\ref{Prop:W*=R*} that $f\in W^{1,p}(\Omega;V^*)$.
\subsection{Uniqueness of weak weak* derivatives and Sobolev norms}\label{sec:uniq} We say that $g\colon \Omega \to V^*$ is a \emph{weak* representative} of $f\colon \Omega \to V^*$ and write $f \equiv_* g$ if for every $v\in V$ one has that $f_v\equiv g_v$. Certainly $f\equiv g$ implies that $f \equiv_* g$. Concerning the validity of the inverse implication we have the following examples and counterexamples.
\begin{ex}
\label{ex:weak*-equiv}
\begin{enumerate}[(i)]
    \item If $V$ is separable then $f\equiv_* g$ implies that $f \equiv g$ for functions $f,g\colon \Omega \to V^*$. Indeed if $f \equiv_* g$ and $S\subset V$ is a countable dense subset then for each $v\in S$ there is a nullset $N_v \subset \Omega$ such that $f_{v}=g_{v}$ on $\Omega \setminus N_v$. In particular, for the nullset $N:=\bigcup_{v\in S}N_v$ it follows that 
\begin{equation}
\label{eq:ae-eq-f-g}
\langle v, f(x)\rangle=\langle v, g(x)\rangle
\end{equation}
for every $v \in S$ and $x\in \Omega \setminus N$. Since $S$ is dense, we conclude from~\eqref{eq:ae-eq-f-g} that $f=g$ on $\Omega \setminus N$ and hence that $f\equiv g$.
    \item \label{it:w*-counterex}
    For a given function $\Phi \colon \Omega \to \mathbb{R}$ we can define $h^\Phi\colon \Omega \to \ell^\infty(\Omega)$ by setting \[
h^\Phi(x):=\Phi(x)\cdot \chi_{\{x\}}.\]
Then $h_\Phi\equiv_* 0$ since any $g\in \ell^1(\Omega)$ is supported on a countable set $S$ and \[
\langle g, h^\Phi(x)\rangle=0
\]
for every $x\in \Omega\setminus S$. On the other hand $h_\Phi\equiv 0$ only if $\Phi\equiv 0$.
\end{enumerate}
    
\end{ex}

It turns out that weak weak* derivatives are not unique up to the choice of a representative but instead only up to the choice of a weak* representative.
\begin{lem}
\label{lem:w*-equiv}
    Let $f\in L^1(\Omega; V^*)$. Further let $f^j\colon \Omega \to V^*$ be a $j$-th weak weak* derivative of $f$ and $h\colon \Omega \to V^*$. Then $h\equiv_* f^j$ if and only if $h$ is a $j$-th weak weak* derivative of $f$.
\end{lem}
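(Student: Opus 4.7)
The plan is to reduce both implications to the classical uniqueness of scalar weak derivatives by pairing everything against vectors $v\in V$. Using the defining property of the Gelfand integral, the identity \eqref{eq:def-w*} characterising a $j$-th weak weak* derivative $f^j$ of $f$ is equivalent to the requirement that, for every $v\in V$, the scalar function $f^j_v$ is a classical $j$-th weak partial derivative of $f_v\in L^1(\Omega)$ in the sense of \eqref{eq:classical_weak_derivative}. Here one uses that $f^j_v\in L^1(\Omega)$, which follows from $|f^j_v(x)|\leq \|v\|\cdot\|f^j(x)\|$ together with the upper integrability of $\|f^j\|$.

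For the forward direction, assume $h\equiv_* f^j$, so that $h_v\equiv f^j_v$ for every $v\in V$. Each $h_v$ is therefore measurable, whence $h$ is weak* measurable, and $h_v$ coincides with $f^j_v$ as an element of $L^1(\Omega)$. Substituting $h_v$ for $f^j_v$ in the scalar characterisation above leaves both sides of \eqref{eq:classical_weak_derivative} unchanged, so $h_v$ is a classical $j$-th weak partial derivative of $f_v$ for every $v\in V$. Repackaging these scalar identities into a single Gelfand integral equation yields \eqref{eq:def-w*} for $h$ in place of $f^j$.

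For the reverse direction, assume $h$ is a $j$-th weak weak* derivative of $f$ and fix an arbitrary $v\in V$. Testing \eqref{eq:def-w*} with this $v$ shows that both $h_v$ and $f^j_v$ are classical $j$-th weak partial derivatives of the same function $f_v\in L^1(\Omega)$. The standard uniqueness of weak partial derivatives of $L^1_{\textnormal{loc}}$-functions (a direct consequence of the fundamental lemma of the calculus of variations) forces $h_v\equiv f^j_v$, and since $v\in V$ was arbitrary we conclude $h\equiv_* f^j$.

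The argument is essentially bookkeeping: the only point requiring any care is the translation between the Gelfand integral identity \eqref{eq:def-w*} and the family of scalar identities indexed by $v\in V$, but this translation is immediate from the definition of the Gelfand integral. There is no serious obstacle, and in particular no issue arises from the nonuniqueness phenomenon illustrated in Example~\ref{ex:weak*-equiv}\eqref{it:w*-counterex}, since that nonuniqueness is precisely what the lemma is designed to record.
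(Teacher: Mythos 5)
Your proof is correct and follows essentially the same route as the paper: both directions are reduced to the family of scalar identities obtained by pairing with $v\in V$, the forward direction being immediate from $h_v\equiv f^j_v$ and the reverse direction following from the classical uniqueness of weak derivatives of real-valued functions. The extra remarks on integrability and the Gelfand-integral translation are fine but not needed beyond what the paper records.
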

\begin{proof}
    Assume $h\equiv_* {f}^j$. Then for every $v\in V$ one has  $h_v\equiv f^j_v$. In particular $h_v \colon \Omega \to \mathbb{R}$ is measurable and 
    \begin{equation}
        \int_\Omega \varphi(x)h_v(x)\ \textnormal{d}x=\int_\Omega \varphi(x)f^j_v(x)\ \textnormal{d}x
    \end{equation}
    for every $\varphi \in C^\infty_0(\Omega)$. Thus we conclude that $h$ is weak* measurable and a $j$-th weak weak* derivative of $f$.

    For the other implication assume $h$ is a $j$-th weak weak* derivative of $f$. Then for every $v\in V$ both $f^j_v$ and $h_v$ are weak derivatives of $f_v$. Thus the uniqueness of weak derivatives for real valued functions implies that $h_v\equiv f^j_v$. Since this holds for every $v\in V$ we conclude that $h \equiv_* f^j$.
    \end{proof}

From Example~\ref{ex:weak*-equiv}\eqref{it:w*-counterex} and Lemma~\ref{lem:w*-equiv} we see that even constant maps can have weak weak* derivatives $h$ for which the function $x \mapsto \|h(x)\|$ is nonmeasurable or $\upint_\Omega \|h(x)\|^p \ \textnormal{d}x$ is arbitrary large (or even infinite). Nevertheless one can define a (semi)norm on $W^{1,p}_*(\Omega ;V^*)$ by setting
\begin{equation}
    \|f\|_{W^{1,p}_*(\Omega; V^*)}:=\left(\int_\Omega \|f(x)\|^{p}\ \textnormal{d}x\right)^{\frac{1}{p}}+\inf \left( \upint_\Omega\left|\left(f^1(x),f^2(x),\dots,f^n(x)\right)\right|^p \textnormal{d}x\right)^{\frac{1}{p}}
\end{equation}
where $|.|$ denotes the product norm on $(V^*)^n$ and the infimum ranges respectively over all partial weak weak* derivatives $f^j$ of $f$. The estimates in Sections~\ref{sec:hard_inc} and~\ref{sec:triv_inc} show that
\begin{equation}
    \frac{1}{\sqrt{n}}\cdot\|f\|_{W^{1,p}}\leq \|f\|_{W^{1,p}_*}\leq \sqrt{n} \cdot \|f\|_{W^{1,p}}
\end{equation}
for every $f\in W^{1,p}(\Omega ;V^*)$. Here as usual the Sobolev norm on $W^{1,p}(\Omega ;V^*)$ is defined by
\begin{equation}
    \|f\|_{W^{1,p}}:=\left(\int_\Omega \|f(x)\|^{p}\ \textnormal{d}x\right)^{\frac{1}{p}}+\inf \left(\int_\Omega\left|g(x)\right|^p \textnormal{d}x\right)^{\frac{1}{p}}
\end{equation}
where the infimum ranges over all Reshetnyak upper gradients of $f$. In particular the Sobolev norms  $\|\cdot\|_{W^{1,p}(\Omega ;V^*)}$ and $\|\cdot\|_{W^{1,p}_*(\Omega ;V^*)}$ are equivalent.

\subsection{Measurability of the norm of weak weak* derivatives}
\label{sec:meas-norm}
We remark again that in contrast to \cite{CE:2021} we do not assume the measurability of $x\mapsto \|f(x)\|$ for functions $f\in L^1_*(\Omega;V^*)$. In particular we cannot prove the existence of weak weak* derivatives with this property. The following Lemma however shows that it is indeed possible in the one dimensional case. 
\begin{lem}
\label{lem:norm-meas}
Let $\gamma\colon (a,b)\to V^*$ be absolutely continuous. Then 
there is a weak weak* derivative $\eta\in L^1_*((a,b);V^*)$ of $\gamma$ such that $t\mapsto \|\eta(t)\|$ is measurable and \begin{equation}
\label{eq:eta_md}
    \|\eta(t)\|=\textnormal{md}\gamma(t)
\end{equation} for almost every $t\in (a,b)$.
\end{lem}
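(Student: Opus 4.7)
The plan is to reduce the statement to a separable setting by exploiting that the image $\gamma((a,b))$ is separable, so that the norm of differences $\gamma(t)-\gamma(s)$ can be computed against countably many fixed directions in $V$. First I would extract a countable family $\{v_k\}_{k\in \mathbb{N}}\subset V$ with $\|v_k\|\leq 1$ such that
\begin{equation}
\label{eq:norming}
\|\gamma(t)-\gamma(s)\| \;=\; \sup_{k\in \mathbb{N}} |\gamma_{v_k}(t)-\gamma_{v_k}(s)| \qquad \text{for all }t,s\in(a,b).
\end{equation}
For this, take a countable dense subset $\{w_n\}$ of the separable set $\gamma((a,b))-\gamma((a,b))\subset V^*$, and for each $n,m\in \mathbb{N}$ use the definition of the dual norm on $V^*$ to pick $v_{n,m}\in V$ with $\|v_{n,m}\|\leq 1$ and $\langle v_{n,m},w_n\rangle>\|w_n\|-\tfrac{1}{m}$. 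A routine $3\varepsilon$-argument, using crucially the uniform bound $\|v_{n,m}\|\leq 1$ and the continuity of $\gamma$, then upgrades the pointwise norming on $\{w_n\}$ to~\eqref{eq:norming}.

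Each composition $\gamma_{v_k}$ is absolutely continuous (being the image of the AC curve $\gamma$ under a linear map of norm at most one), hence has an ordinary derivative $\gamma_{v_k}'(t)$ outside some nullset $N_k$. Setting $N:=\bigcup_k N_k$, I would define
\[
\psi(t):=\sup_{k\in\mathbb{N}} |\gamma_{v_k}'(t)|
\]
on $(a,b)\setminus N$; this is measurable as a countable supremum of measurable functions. The key claim is that $\psi=\textnormal{md}\gamma$ a.e. The trivial inequality $|\gamma_{v_k}'(t)|\leq \|v_k\|\cdot \textnormal{md}\gamma(t)\leq \textnormal{md}\gamma(t)$ a.e.\ gives $\psi\leq \textnormal{md}\gamma$ a.e. For the reverse direction, the fundamental theorem of calculus for each $\gamma_{v_k}$ combined with~\eqref{eq:norming} yields
\[
\|\gamma(t)-\gamma(s)\|=\sup_k\left|\int_s^t \gamma_{v_k}'(r)\ \textnormal{d}r\right|\leq \int_s^t \psi(r)\ \textnormal{d}r,
\]
so that $\ell(\gamma|_{[s,t]})\leq \int_s^t \psi$ by summing over partitions. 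Since $\ell(\gamma|_{[s,t]})=\int_s^t \textnormal{md}\gamma$, combining this with $\psi\leq \textnormal{md}\gamma$ a.e.\ forces $\psi=\textnormal{md}\gamma$ a.e.

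Finally I would construct $\eta$ via the ultralimit formula from the proof of Theorem~\ref{theorem:main} specialized to $n=1$. That proof already yields weak* measurability of $\eta$, the weak weak* derivative property, and $\|\eta(t)\|\leq \textnormal{md}\gamma(t)$ a.e. Moreover, at every $t\notin N_k$ where $\textnormal{md}\gamma(t)$ exists the ultralimit defining $\langle v_k,\eta(t)\rangle$ coincides with the classical limit, so $\langle v_k,\eta(t)\rangle = \gamma_{v_k}'(t)$ for every $k$. Hence
\[
\|\eta(t)\| \;\geq\; \sup_k |\langle v_k,\eta(t)\rangle| \;=\; \psi(t) \;=\; \textnormal{md}\gamma(t) \qquad \text{a.e.},
\]
proving~\eqref{eq:eta_md}. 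After setting $\eta$ equal to $0$ on the exceptional nullset (permissible by Lemma~\ref{lem:w*-equiv}), the function $t\mapsto \|\eta(t)\|$ agrees with $\psi$ outside a nullset and is therefore measurable.

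The main obstacle is the construction of the countable norming family in Step~1: one must upgrade pointwise norming on countably many fixed vectors to a uniform identity for all pairs $(t,s)$, which relies both on separability of the image and on the uniform bound $\|v_{n,m}\|\leq 1$ (so that approximating $w=\gamma(t)-\gamma(s)$ by $w_n$ controls $\langle v_{n,m},w-w_n\rangle$ by $\|w-w_n\|$). Without this norming family, the ultralimit construction alone only sees the metric derivative through individual directions and cannot be shown to saturate it.
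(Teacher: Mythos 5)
Your proof is correct, and it follows the same overall strategy as the paper's: take the ultralimit-constructed derivative $\eta$ from the proof of Theorem~\ref{theorem:main}, which already satisfies $\|\eta(t)\|\leq \textnormal{md}\gamma(t)$ a.e., and then exhibit a countable family of unit vectors in $V$ along which $\eta$ saturates the metric derivative, so that $\|\eta\|$ coincides a.e.\ with a measurable supremum. The difference lies in how that countable family is produced and how saturation is verified. You use separability of the image $\gamma((a,b))$ to build a single countable family norming \emph{every} difference $\gamma(t)-\gamma(s)$, and then close the argument pointwise via $\sup_k|\gamma_{v_k}'|\geq \textnormal{md}\gamma$ obtained from $\ell(\gamma|_{[s,t]})\leq \int_s^t\psi$. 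The paper is more parsimonious: it only norms the finitely many differences arising from length-approximating partitions (so it never needs the uniform norming identity~\eqref{eq:norming}), and closes the argument with the single global integral identity $\int_a^b\nu = \ell(\gamma)=\int_a^b\textnormal{md}\gamma$ combined with $\nu\leq \textnormal{md}\gamma$ a.e. Your route yields the slightly stronger intermediate fact that a fixed countable family of directions computes $\|\gamma(t)-\gamma(s)\|$ for all pairs, which could be reused elsewhere; the paper's route avoids the $3\varepsilon$-norming argument entirely and makes the mechanism (the length identity) more transparent. Both hinge on the same two ingredients --- the upper bound from the ultralimit construction and $\ell(\gamma)=\int\textnormal{md}\gamma$ --- and both are complete. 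One cosmetic remark: continuity of $\gamma$ enters your argument only to guarantee separability of the image; the $3\varepsilon$-upgrade itself needs only density of $\{w_n\}$ and the bound $\|v_{n,m}\|\leq 1$.
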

\begin{proof}
    By the proof of Theorem~\ref{theorem:main} there is a weak weak* derivative $\eta$ of $\gamma$ such that
    \begin{equation}
        \|\eta(t)\|\leq \textnormal{md}\gamma (t)
    \end{equation}
    for every $t\in (a,b)$ at which $\gamma$ is metrically differentiable. For each $k\in \mathbb{N}$ we choose $a< t^1_k<t^2_k\dots <t^{n_k}_k< b$ such that
    \begin{equation}
        \sum^{n_k-1}_{l=1}\|\gamma(t^l_k)-\gamma(t^{l+1}_k)\|\geq \ell(\gamma)-\frac{1}{k}
    \end{equation}
    and respectively $v^{kl}\in V$ with $\|v^{kl}\|\leq 1$ such that 
    \begin{equation}
        \langle v^{kl},\gamma(t^{l}_k)-\gamma(t^{l+1}_k)\rangle \geq \|\gamma(t^{l}_k)-\gamma(t^{l+1}_k)\|-\frac{1}{k\cdot n_k}.
    \end{equation}
    We define $\nu \colon (a,b)\to \mathbb{R}$ by
    \begin{equation}
        \nu(t):=\sup_{k\in \mathbb{N}} \sup_{l\in \{1,\dots, n_k\}} \left|\eta_{v^{kl}}(t)\right|.
    \end{equation}
    Then $\nu$ is measurable and for almost every $t\in (a,b)$ one has
    \begin{equation}
    \label{eq:lattice-ineq}
        \nu(t)\leq \|\eta(t)\|\leq \textnormal{md}\gamma(t).
    \end{equation}
    Furthermore for every $k\in \mathbb{N}$ we have that
    \begin{align}
        \int_a^b\nu(t)\ \textnormal{d}t&\geq \sum_{l=1}^{n_k-1} \int_{t^l_k}^{t^{l+1}_k}|\eta_{v^{kl}}(t)|\ \textnormal{d}t\\
        &\geq \sum_{l=1}^{n_k-1} \langle v^{kl},\gamma(t^{l}_k)-\gamma(t^{l+1}_k)\rangle\\
        &\geq \ell(\gamma)-\frac{2}{k}.
    \end{align}
    and hence
    \begin{equation}
        \int_a^b\nu(t)\ \textnormal{d}t=\ell(\gamma)=\int_a^b \textnormal{md}\gamma(t)\ \textnormal{d}t.
    \end{equation}
    Together with \eqref{eq:lattice-ineq} this implies that $\nu(t)=\textnormal{md}\gamma(t)$ for almost every $t\in (a,b)$ and hence \eqref{eq:eta_md}. Since $t\mapsto \textnormal{md}\gamma(t)$ is measurable, we conclude that so is $t\mapsto \|\eta(t)\|$.
\end{proof}
Unfortunately it seems unclear how to derive from Lemma~\ref{lem:norm-meas} the measurability of $x\mapsto \|f^j(x)\|$ for Sobolev maps $f$ defined on a higher dimensional domain $\Omega$. The problem is that if the $1$-dimensional line sections of a set $S\subset \Omega$ are $\mathcal{H}^1$-nullsets this does not imply the $\mathcal{L}^n$-measurability of $S$, see e.g.\ \cite[Example 10.21]{GO64}. 
\subsection{Pointwise weak* derivatives}
\label{sec:pointwise}
We say that a map $f\colon \Omega \to V^*$ is \emph{weak* differentiable} at $x\in \Omega$ in the $x_j$-direction if there is $\partial_j f(x)\in V^*$ such that 
\[
\frac{1}{h}\cdot \left(f(x+h\cdot e_j)-f(x)\right)  \wlim \partial_j f(x) \quad \textnormal{as }h\to 0.
\]
The proof of the equality of $W^{1,p}_*(\Omega ;V^*)$ and $W^{1,p}(\Omega ;V^*)$ for dual-to-separable Banach spaces in \cite{CJP22, CE:2021} heavily relies on the observation that absolutely continuous curves in dual-to-separable Banach spaces are weak* differentiable almost everywhere. The following examples show that this is not true for arbitrary dual spaces~$V^*$.

\begin{ex}
\label{ex:not-diff}
\begin{enumerate}[(i)]
\item We claim that the Kuratowski embedding of the unit interval $\kappa=\kappa^{(0,1)}\colon (0,1)\to \ell^\infty((0,1))$ as defined in Example~\ref{ex:emb}.\eqref{it:emb-kur} is nowhere weak* differentiable. To this end fix $t_0\in (0,1)$. Then 
\begin{equation}
    \left\langle \chi_{\{t_0\}}, \kappa(t)\right\rangle=|t-t_0|
\end{equation}
is not differentiable in $t$ at $t=t_0$. In particular $\kappa$ cannot be weak* differentiable at $t_0$.

\item Similarly $\Psi=\Psi^{(0,1),0}\colon (0,1)\to \mathcal{F}((0,1),0)$ as defined in Example~\ref{ex:emb}.\eqref{it:emb-kur} is nowhere weak* differentiable. Indeed for every $t_0\in (0,1)$ there is a function $f\in \textnormal{Lip}_0((0,1))$ such that $f$ is not differentiable at $t_0$ (e.g.\ $f(t)=|t-t_0|-t_0$). This however means that \[\Psi_f(t)=f(t)\] is not differentiable in $t$ at $t=t_0$, and hence $\Psi$ not weak* differentiable at $t_0$.
\end{enumerate}
\end{ex}

To obtain a characterization of dual spaces $V^*$ in which absolutely continuous curves are weak* differentiable we propose the following definition.

\begin{defn}
\label{def:leb-dens}
    We say that a function $f\in L^{1}_*((0,1);V^*)$ satisfies the weak* Lebesgue density theorem if for almost every $t\in (0,1)$ one has
    \begin{equation}
        \dashint_t^{t+h}f(s) \ \textnormal{d}s \wlim f(t) \quad \textnormal{as }h \to 0.
    \end{equation}
    We say that $V^*$ has the weak* Lebesgue density property if every $f\in L^{\infty}_*((0,1);V^*)$ has a weak* representative which satisfies the weak* Lebesgue density theorem.
\end{defn}

We have the following two classes of examples of spaces which have the Lebesgue density property.
\begin{ex}
\begin{enumerate}[(i)]
\item If $V$ is separable then $V^*$ has the weak* Lebesgue density property. To prove this we fix $f\in L^\infty_*((0,1) ;V^*)$ and choose a countable dense set $S\subset V$. For $v\in S$ let $N_v\subset (0,1)$ be a nullset such that $f_v$ is approximately continuous on $(0,1)\setminus N_v$. Setting $N:=\bigcup_{v\in S} N_v$ we deduce from Lemma~\ref{lem:leb-dens} 
that
\begin{equation}
        \dashint_t^{t+h}f(s) \ \textnormal{d}s \wlim f(t) \quad \textnormal{as }h \to 0
    \end{equation}
for every $t\in (0,1)\setminus N$. In particular $f$ satisfies the weak* Lebesgue density theorem and hence $V^*$ has the Lebesgue density property.
\item If $V^*$ has the Radon--Nikodým property then $V^*$ has the weak* Lebesgue density property. Indeed every function $f\in L^{\infty}_*((0,1);V^*)$ defines a continuous functional $T_f\colon L^{1}((0,1))\to V^*$ by setting
\begin{equation}
    T_f(h):=\int_0^1 h(s)\cdot f(s)\ \textnormal{d}s.
\end{equation}
Now, if $V^*$ has the Radon--Nikodým property then \cite[Theorem 1.3.16]{HNVW16} implies that there is $g\in L^{\infty}((0,1);V^*)$ such $T_f=T_g$. In particular then $g$ is a weak* representative of $f$ and, since it is measurable, satisfies the Lebesgue density theorem.
\end{enumerate}
\end{ex}
\begin{thm}
    The following are equivalent:
    \begin{enumerate}[(i)]
        \item \label{it:LDP}
        $V^*$ has the weak* Lebesgue density property.
        \item \label{it:ac}
        Every absolutely continuous curve $\gamma\colon (a,b)\to V^*$ is weak* differentiable almost everywhere.
    \end{enumerate}
\end{thm}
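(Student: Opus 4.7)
The plan is to prove the two implications separately by showing that Gelfand integration and weak weak* differentiation are mutually inverse.

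For (\ref{it:LDP})$\Rightarrow$(\ref{it:ac}), given an absolutely continuous $\gamma\colon (a,b)\to V^*$, I would first reparametrize by arc length. The length function $s(t):=\ell(\gamma|_{(a,t)})$ is absolutely continuous and nondecreasing onto $[0,L]$ with $L:=\ell(\gamma)$, and there is a unique $1$-Lipschitz curve $\tilde\gamma\colon [0,L]\to V^*$ with $\gamma=\tilde\gamma\circ s$. Theorem~\ref{theorem:main} produces a weak weak* derivative $\tilde\eta$ of $\tilde\gamma$ with $\|\tilde\eta(\tau)\|\leq \textnormal{md}\tilde\gamma(\tau)\leq 1$ almost everywhere, so $\tilde\eta\in L^\infty_*((0,L);V^*)$. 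Hypothesis (\ref{it:LDP}) then yields a weak* representative $\hat\eta\equiv_*\tilde\eta$ satisfying the weak* Lebesgue density theorem. Since $\tilde\gamma_v\in W^{1,1}((0,L))$ has weak derivative $\tilde\eta_v$ for every $v\in V$, pairing gives
\[
\frac{\tilde\gamma(\tau+k)-\tilde\gamma(\tau)}{k}=\dashint_{\tau}^{\tau+k}\tilde\eta(\sigma)\,\textnormal{d}\sigma=\dashint_{\tau}^{\tau+k}\hat\eta(\sigma)\,\textnormal{d}\sigma\wlim \hat\eta(\tau)
\]
at a.e.\ $\tau\in(0,L)$, which is weak* differentiability of $\tilde\gamma$.

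To descend this to $\gamma$ I would invoke a chain rule. Where $s'(t)=0$ the $1$-Lipschitz property of $\tilde\gamma$ forces $\|\gamma(t+h)-\gamma(t)\|=o(h)$, so $\gamma$ is even strongly differentiable with derivative $0$. Where $s'(t)>0$ and $\tilde\gamma$ is weak* differentiable at $s(t)$, setting $k_n:=s(t+h_n)-s(t)$ (nonzero for small $h_n\neq 0$) gives
\[
\frac{\gamma(t+h_n)-\gamma(t)}{h_n}=\frac{k_n}{h_n}\cdot\frac{\tilde\gamma(s(t)+k_n)-\tilde\gamma(s(t))}{k_n}\wlim s'(t)\cdot\tilde\gamma'(s(t)),
\]
since the second factor weak* converges and is norm-bounded by $1$. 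To see that these two cases cover a.e.\ $t$, let $B\subset[0,L]$ be the null set where $\tilde\gamma$ fails to be weak* differentiable. Since plateaus of $s$ contribute nothing to $\int s'\,\textnormal{d}t$ and $s$ is strictly increasing off its plateaus, the change-of-variables formula yields $\int_{(a,b)} \chi_B(s(t))s'(t)\,\textnormal{d}t\leq |B|=0$, so $\{t:s'(t)>0,\ s(t)\in B\}$ is null.

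For (\ref{it:ac})$\Rightarrow$(\ref{it:LDP}), given $f\in L^\infty_*((0,1);V^*)$ I would form the Gelfand integral $\gamma(t):=\int_0^t f(\sigma)\,\textnormal{d}\sigma$. The estimate $|\langle v,\gamma(t)-\gamma(r)\rangle|=|\int_r^t f_v(\sigma)\,\textnormal{d}\sigma|\leq \|v\|\cdot\|f\|_{L^\infty_*}\cdot|t-r|$ shows $\gamma$ is Lipschitz, so (\ref{it:ac}) furnishes a full-measure set $A\subset(0,1)$ on which $\gamma$ is weak* differentiable. Define $\tilde f$ to be this weak* derivative on $A$ and $0$ elsewhere. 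For every $v\in V$, $\gamma_v$ is scalar Lipschitz with a.e.\ derivative $f_v$, and on $A$ this derivative equals $\langle v,\tilde f(\cdot)\rangle$, so $\tilde f_v\equiv f_v$; in particular $\tilde f$ is weak* measurable and $\tilde f\equiv_* f$. Then at every $\tau\in A$,
\[
\dashint_{\tau}^{\tau+h}\tilde f(\sigma)\,\textnormal{d}\sigma=\dashint_{\tau}^{\tau+h}f(\sigma)\,\textnormal{d}\sigma=\frac{\gamma(\tau+h)-\gamma(\tau)}{h}\wlim \tilde f(\tau),
\]
so $\tilde f$ satisfies the weak* Lebesgue density theorem.

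The step I expect to be hardest is the chain-rule argument in the first implication: one must verify that the null set of nondifferentiability of $\tilde\gamma$ does not lift under the arc-length parametrization to a set of positive measure on $\{s'>0\}$, which rests on the area formula for nondecreasing absolutely continuous functions. Everything else reduces to the observation that a weak weak* derivative of a curve and its Gelfand integral are inverse operations, which follows by pairing with $v\in V$ and applying the scalar fundamental theorem of calculus.
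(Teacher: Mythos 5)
Your proof is correct and follows the same overall architecture as the paper's: for (\ref{it:LDP})$\Rightarrow$(\ref{it:ac}) reduce to Lipschitz curves by reparametrization and combine the weak weak* derivative supplied by Theorem~\ref{theorem:main} with the weak* Lebesgue density hypothesis, and for (\ref{it:ac})$\Rightarrow$(\ref{it:LDP}) differentiate the Gelfand primitive $\gamma(t)=\int_0^t f(\sigma)\,\textnormal{d}\sigma$ and observe that its weak* derivative is a weak* representative of $f$ satisfying the density theorem. The one place you diverge --- and the step you rightly flag as the most delicate --- is the reparametrization. You use the genuine arc-length function $s(t)=\ell(\gamma|_{(a,t)})$, which is merely nondecreasing, and must therefore split into the plateau case $s'(t)=0$ and the case $s'(t)>0$, invoking the change-of-variables inequality $\int_a^b\chi_B(s(t))\,s'(t)\,\textnormal{d}t\le |B|$ to show that the null set $B$ of nondifferentiability of $\tilde\gamma$ does not lift to positive measure on $\{s'>0\}$; this is correct as written (after enlarging $B$ to a Borel null set so that $\chi_B\circ s$ is measurable). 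The paper sidesteps all of this with the strictly increasing homeomorphism $\Phi(t)=t+\ell(\gamma|_{(a,t)})$: its inverse is $1$-Lipschitz, so $\bar\gamma=\gamma\circ\Phi^{-1}$ is still $1$-Lipschitz, both $\Phi$ and $\Phi^{-1}$ map null sets to null sets, and the chain rule $\gamma=\bar\gamma\circ\Phi$ applies almost everywhere with no case distinction and no area formula. Your version proves nothing more general here, so if you want the cleaner argument, add $t$ to the arc length before inverting; otherwise the proof stands as is.
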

\begin{proof}
    To show that (i) implies (ii) we assume that $V^*$ has the weak* Lebesgue density property. Let $\gamma \colon (a,b)\to V^*$ be a Lipschitz curve. Then $\gamma \in W^{1,\infty}((a,b);V^*)$ and hence, by Theorem~\ref{theorem:main}, $\gamma$ has a weak weak* derivative $\eta \in L^\infty_*((a,b);V^*)$. In particular, for every $v\in V$ one has that $\eta_v=\gamma_v'$ almost everywhere and hence for every $s,t\in (a,b)$ that
    \begin{equation}
        \gamma(t)-\gamma(s)=\int_s^t \eta(r) \ \textnormal{d}r.
    \end{equation}
    Since $V^*$ has the weak* Lebesgue density property we may without loss of generality assume that $\eta$ satisfies the weak* Lebesgue density theorem. Then for almost every $t\in (a,b)$ one has
    \begin{equation}
        \frac{1}{h} \cdot (\gamma(t+h)-\gamma(t)) =\dashint_t^{t+h}\eta(s)\ \textnormal{d}s \wlim \eta(t).
    \end{equation}
    In particular for such $t$ the Lipschitz curve $\gamma$ is weak* differentiable at $t$ with weak* derivative $\eta(t)$.

    Now let $\gamma\colon (a,b)\to V^*$ be a general absolutely continuous curve. Set $L:=b+\ell(\gamma)$ and define $\Phi \colon (a,b)\to (a,L)$ by 
    \[
    \Phi(t):=t+\ell(\gamma|_{(a,t)}).
    \]
    Then $\Phi$ is an absolutely continuous homeomorphism such that $\Phi^{-1}$ is $1$-Lipschitz. Furthermore
    \begin{equation}
       \|\gamma(t)-\gamma(s)\|\leq \ell\left(\gamma|_{[s,t]}\right)\leq |\Phi(t)-\Phi(s)|
    \end{equation}
    for every $s,t\in (a,b)$, and hence $\bar{\gamma}:=\gamma \circ \Phi^{-1}$ is $1$-Lipschitz. Note that the curve $\gamma=\bar{\gamma}\circ \Phi$ is weak* differentiable at $t\in (a,b)$ whenever $\Phi$ is differentiable at $t$ and $\bar{\gamma}$ is weak* differentiable at $\Phi(t)$. Since $\Phi$ is differentiable almost everywhere, $\bar{\gamma}$ is weak* differentiable almost everywhere, and both $\Phi$ and $\Phi^{-1}$ preserve nullsets, we deduce that $\gamma$ is weak* differentiable almost everywhere.

    Next we prove that (ii) implies (i). So assume that absolutely continuous curves in $V^*$ are weak* differentiable almost everywhere and let $\eta \in L^\infty_*((0,1);V^*)$. Then there is $L\in \mathbb{R}$ such that $\|\eta(t)\|\leq L$ for almost every $t\in (0,1)$. We define the curve $\gamma \colon (0,1)\to V^*$ by setting
    \begin{equation}
        \gamma(t):=\int_a^t \eta(s)\ \textnormal{d}s
    \end{equation}
    Then for every $s,t\in (0,1)$ and $v\in V$ one has
    \begin{equation}
        \langle v, \gamma(t)-\gamma(s) \rangle=\int_s^t\eta_v(r)\ \textnormal{d}r\leq L\cdot |t-s|, 
    \end{equation}
    and hence $\gamma$ is $L$-Lipschitz. Thus $\gamma$ is weak* differentiable almost everywhere and a weak weak* derivative $\nu \in L^\infty_*((0,1);V^*)$ can be defined by setting $\nu(x):=\gamma'(x)$ whenever the latter exists. Then $\nu \equiv_* \eta$ and for every $t$ at which $\gamma$ is weak* differentiable
    \begin{equation}
        \dashint_t^{t+h} \nu(s) \ \textnormal{d}s=\dashint_t^{t+h} \eta(s) \ \textnormal{d}s=\frac{1}{h}\cdot (\gamma(t+h)-\gamma(t))\wlim \nu(t)
    \end{equation}
    as  $h\to 0$.
\end{proof}
If $V^*$ is dual-to-separable or has the Radon--Nikodým property then (as was previously observed in \cite{HT:08, CJP22, CE:2021} resp.\ \cite{CJPA2020, Evs21}) it follows that maps $f\in W^{1,p}(\Omega; V^*)$ have representatives that are weak* differentiable almost everywhere. 
It remains an open question whether this is implied more generally by the weak* Lebesgue density property. As in Section~\ref{sec:meas-norm} the problem is that a nonmeasurable property on $\Omega$ may hold almost everywhere along almost every line segment.

\bibliographystyle{plain}
\bibliography{bibliography}
\end{document}